\documentclass[a4paper,12pt]{amsart}
\usepackage[utf8x]{inputenc}
\usepackage{amssymb}
\usepackage{amsmath}
\usepackage{mathrsfs}%Serve a poter scrivere in corsivo inglese
\usepackage{bm}
\usepackage{amsthm}%Serve per avere ambienti non numerati
\usepackage{enumitem}%Serve per gli elenchi numerati
\usepackage{color}%Serve per scrivere con i colori
\usepackage{graphicx}
\usepackage{bm}%serve a scrivere in grassetto corsivo
\usepackage{ulem}%serve a barrare o sottolineare
\usepackage{upgreek}

\newtheorem{proposizione}{Proposition}[section]
\newtheorem{teorema}[proposizione]{Theorem}
\newtheorem{lemma}[proposizione]{Lemma}

\renewcommand{\phi}{\varphi}
\newcommand{\de}{\partial}
\newcommand{\R}{\mathbf{R}}
\newcommand{\C}{\mathbf{C}}
\newcommand{\ci}[1]{\mathscr{#1}}%corsivo inglese

\newcommand{\N}[1]{\left\lVert#1\right\rVert}
\newcommand{\bra}{\left\langle}
\newcommand{\ket}{\right\rangle}
\renewcommand{\H}{\mathbf{H}}
\newcommand{\g}[1]{\mathfrak{#1}}
\newcommand{\e}{\varepsilon}

\renewcommand{\l}{\lambda }
\newcommand{\var}{\varphi }

\renewcommand{\th }{\theta }

\DeclareMathOperator{\Vol}{Vol}

\title{A non-convergence phenomenon for the CR Yamabe flow}
\author{Claudio Afeltra}
\address{Université de Montpellier,  
	Institut Montpelliérain  Alexander Grothendieck.
	Place Eug\`ene Bataillon, 
	34090 Montpellier,  France}

\email{claudio.afeltra@umontpellier.fr}

\author{Pak Tung Ho}
\address{Department of Mathematics, Tamkang University Tamsui, New Taipei City 251301, Taiwan}

\email{paktungho@yahoo.com.hk}

\author{Andrea Pinamonti}
\address{Dipartimento di Matematica, Università di Trento\\
Via Sommarive, 14, 38123 Povo TN, Italy}
\email{andrea.pinamonti@unitn.it}

\subjclass[2000]{Primary 32V05, 32V20; Secondary 35R01, 53D10 }

\begin{document}

\begin{abstract}
We construct a contact form on a three dimensional CR manifold such that the CR Yamabe flow fails to converge. More precisely, on small Rossi deformations of the standard
CR three-sphere, we exhibit an example whose corresponding CR Yamabe flow develops a one-bubble concentration regime.

The construction is based on the negativity of the pseudohermitian
mass on the Rossi spheres. This shows that mass positivity is not merely
a technical assumption in the known convergence results for the CR
Yamabe flow, but is genuinely connected to the large-time dynamics of the
flow.
\end{abstract}
\maketitle

\section{Introduction}
Given a compact Riemannian manifold $(M,g)$, the Yamabe problem consists of proving the existence of a metric conformal to $g$ with constant scalar curvature.
This problem, which has played a major role in Geometric Analysis, has been solved through the works of Yamabe, Trudinger, Aubin (who settled the case of non-conformally flat manifolds of dimension $n\ge 6$) and Schoen (who settled the remaining cases).
%\rosso{(PUT CITATIONS?)}
The proof is based on the study of the Yamabe functional
$$\ci{R}(g) = \frac{\int_MR_gdV_g}{\Vol_g(M)^{\frac{n-2}{n}}},$$
whose critical points, when restricted to a conformal class, are solutions of the Yamabe problem.
The final step by Schoen requires the Positive Mass Theorem.% in dimensions $3\le n\le 5$ and for locally conformally flat manifolds

Over the last few decades, flowing methods have attracted a considerable attention in Geometric Analysis, and have been employed to try to solve a lot of problems. The Yamabe problem is no exception, and because of its variational form, it is quite natural to define a flow based on it: indeed, up to an inessential renormalization, the $L^2$  gradient flow of the functional $\ci{R}$ is given by the following equation
$$\frac{\de g(t)}{\de t} = -(R_{g(t)}-r_{g(t)})g(t)$$
where $r_{g(t)}$ is the average scalar curvature of $g(t)$.
It is easy to see that if the Yamabe flow has a solution for $t\in(0,\infty)$ and if $g(t)$ converges to some metric $g_{\infty}$, then $g_{\infty}$ is a solution of the Yamabe problem; therefore it is natural to study whether global existence and convergence hold.

Both questions have been answered positively by Brendle: convergence in low dimensions and long time existence in general in \cite{B-LowDim}, and convergence in every dimension in \cite{B-Convergence} (under the assumption that the Positive Mass Theorem holds in any dimension). %\rosso{(CITE PREVIOUS PARTIAL RESULTS?)}

\vspace{5mm}

These questions arise also naturally in the context of CR manifolds.
CR manifolds are defined as $2n+1$-dimensional manifolds $M$ endowed with a complex $n$-dimensional subbundle $\ci{H}$ of $TM\otimes\C$ such that $[\Gamma(\ci{H}),\Gamma(\ci{H})]\subseteq\Gamma(\ci{H})$ and $\ci{H}\cap\overline{\ci{H}}=\{0\}$. CR manifolds provide an abstract model for real hypersurfaces in complex manifolds.
They are called nondegenerate if $H(M)=\g{Re}(\ci{H}\oplus\overline{\ci{H}})$ is a contact distribution; in such a case the choice of a contact form determines a rich geometric structure on $M$,
%In particular it induces a nondegenerate symmetric bilinear form $G_{\theta}$ on $H(M)$; the CR manifold is called pseudoconvex when $G_{\theta}$ is positive definite (this does not depend by the contact form). It also determines a connection on $M$, called Tanaka-Webster connection.
in particular a metric on $H(M)$ and a connection. By contracting twice the curvature tensor of the connection with the metric, analogously to Riemannian geometry, one gets a scalar curvature invariant, called Webster curvature.
Since a contact form is determined up to multiplication by a nowhere-vanishing function, it is natural to study the problem of finding a contact form such that the Webster curvature is constant, known as CR Yamabe problem.

The CR Yamabe problem shows strong similarities with the Riemannian one; in particular it is equivalent to finding critical points of the CR Yamabe functional
$$Q(\theta)=\frac{\int_MR_{\theta}dV_{\theta}}{\Vol_{\theta}(M)}$$
(see Section \ref{Section2} for the notation).
In  \cite{JL}, Jerison and Lee solved the problem for CR manifolds of dimension $2n+1\ge 5$ not locally equivalent to $S^{2n+1}$ using a strategy analogous to Aubin’s in the Riemannian case, while Gamara and Yacoub in \cite{G,GY} settled the remaining cases; since no analogue of the ADM mass was available at the time, they used Bahri and Coron's method of critical points at infinity, a topological method based on the deformation of sublevels of the functional $Q$ by adding virtual critical points in order to bypass the failure of the Palais-Smale condition. In particular they proved the existence of critical points of $Q$, but not of minimizers.

In \cite{CMY1} Cheng, Malchiodi and Yang defined a notion of mass for asymptotically flat CR manifolds (in dimension three, but easily generalizable to higher dimensions as was done in subsequent works), and proved a Positive Mass Theorem
% for three-dimensional CR manifolds
%arising as the blow-up of a compact manifold of positive CR Yamabe class (which is the case of interest for applications to the CR Yamabe problem)
in dimension three,
but with the additional assumption that a certain fourth-order differential operator, the CR Paneitz operator $P_{\theta}$, is positive definite.
They showed that the pseudohermitian mass appears in the expansion of the Green function of the conformal sublaplacian in a way similar to Riemannian geometry, and as a result, they proved that if $P_{\theta}$ is positive definite, the functional $Q$ attains its minimum.

Later Takeuchi in \cite{T} proved that the positivity of $P_{\theta}$ is equivalent to embeddability in $\C^N$ for some $N$, a condition that always holds in dimension $n\ge 5$ by a theorem of Boutet de Monvel (proved in \cite{BdM}, see Theorem 12.2.1 in \cite{CS} for a more recent exposition) but may fail in dimension three.

In \cite{CMY2} Cheng, Malchiodi and Yang estimated the pseudohermitian mass
%the \rosso{blow-up}
of the Rossi spheres $S^3_s$, the simplest non-embeddable example of a three-dimensional CR manifold (see Subsection 12.4 in \cite{CS} for a proof of this fact) and showed that it satisfies $m_s=-18\pi s^2+o(s^2)$, and thus  is negative for small values of $s$.
Using this result they showed that on the Rossi spheres the CR Yamabe functional does not attain its minimum, and therefore that in order to solve in general the CR Yamabe problem, a non-minimizing proof like the one of Gamara and Yacoub is required.

These results are not the only ones that show a breakdown of the analogy between Riemannian and CR geometry which occurs only for non-embeddable CR manifolds (and therefore only in dimension three): in \cite{AP} the first two named authors built a CR structure on $S^3$ for which the set of solutions to the CR Yamabe problem is not compact (unlike Riemannian manifolds of dimension up to 24), and in \cite{ACMY} the first named author, Cheng, Malchiodi and Yang computed the variation (with respect to the CR structure) of the CR Einstein-Hilbert functional on $S^{2n+1}$, observing that for $n\ge 2$ its behavior is essentially the same as the analogous problem in Riemannian geometry, while on $S^3$ it behaves in the same way only on the subspace corresponding to embeddable CR structures, and in the opposite way on the orthogonal complement.

\vspace{5mm}

Motivated by the interest in the CR Yamabe problem, a flow analogous to the Yamabe flow has been introduced in order to study it: the so-called CR Yamabe flow is defined by
$$\frac{\de\theta(t)}{\de t} = -(R_{\theta(t)}-r_{\theta(t)})\theta(t).$$
or equivalently, writing $\theta(t)=u(t)^{\frac{2}{n}}\theta_0$,
$$\frac{\de u}{\de t} = \frac{n}{2}u^{-\frac{2}{n}}\left(b_n\Delta_{\theta_0}u-R_{\theta_0}u +r_{\theta(t)}u^{\frac{n+2}{n}} \right)$$
(see Section \ref{Section2} for the notation).
This flow has been shown to have a solution $\theta(t)$ for all $t\in[0,\infty)$ as proved by the third, together with Shen and Wang, in \cite{H-LTexistence}. Regarding convergence, in \cite{HSW} the third named author, Sheng and Wang proved a convergence theorem; since the positivity of the pseudohermitian mass was required, that theorem was formulated under the hypotheses of the available Positive CR Mass Theorems, that is the already cited one from \cite{CMY1} or \cite{CCY}. 
Note that other versions of Positive 
CR Mass Theorem have been proved in \cite{CC}
and \cite{H1}.
See also \cite{CC1,CCW,H3,H2,HW,SW}
and the references therein for results 
related to the CR Yamabe flow.

Since the Positive Mass Theorem fails for the pseudohermitian mass,
%(cf. \cite{CMY2}),
it is natural to ask whether convergence holds in general for the CR Yamabe flow. In this article we answer this question negatively.

\begin{teorema}\label{MainTheorem}
 There exists $s_0>0$ such that for every $s$ with $0<|s|<s_0$ there exists a smooth contact form $\theta_s$ for the Rossi sphere $S^3_s$ such that the CR Yamabe flow with initial condition $\theta_s$ does not converge.
\end{teorema}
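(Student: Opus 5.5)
The strategy is to exploit the failure of minimization on the Rossi spheres together with the energy-decreasing character of the flow. Let $Y(S^3_s)=\inf Q$ denote the CR Yamabe constant and $Y(S^3)$ that of the standard sphere. Along the CR Yamabe flow, normalized so that the volume is preserved, $t\mapsto Q(\theta(t))$ is non-increasing, since the flow is (up to normalization) the gradient flow of $Q$. Moreover, if $\theta(t)$ converges smoothly to some $\theta_\infty$, then $\theta_\infty$ has constant Webster curvature and $Q(\theta_\infty)=\lim_t Q(\theta(t))$. So it suffices to produce an initial contact form $\theta_s$ such that
\[
Q(\theta_s)<Y(S^3),
\]
while every constant Webster curvature contact form on $S^3_s$ has energy at least $Y(S^3)$, or at least strictly larger than $\lim_t Q(\theta(t))$.

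\textbf{Step 1.} I use the result of \cite{CMY2}: $m_s=-18\pi s^2+o(s^2)<0$ for small $s\neq0$. Plugging into $Q$ a test function glued from a rescaled Heisenberg bubble $\left(\lambda^{-2}\,|\cdot|_{\H}^{4}+\dots\right)$ and the Green function $G_p$ of the conformal sublaplacian, whose expansion contains the mass term $A+m_s\,\cdot$, gives
\[
Q(\theta_s)\le Y(S^3)-c\,|m_s|\,\lambda^{-2}+o(\lambda^{-2})<Y(S^3)
\]
when the mass is negative. Here the sign is reversed compared with the positive-mass computation of \cite{CMY1}. This is exactly the estimate used in \cite{CMY2} to show that the infimum equals $Y(S^3)$ but is not attained. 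I then fix such a $\theta_s$ with $Q(\theta_s)=Y(S^3)-\delta$. Actually \cite{CMY2} gives $\inf Q=Y(S^3)$, i.e.\ $Q\ge Y(S^3)$ everywhere, so the inequality above must be handled carefully. The correct reading is that the mass term pushes $Q$ \emph{above} $Y(S^3)$ in the minimization direction, and the infimum $Y(S^3)$ is approached only by concentrating bubbles.

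\textbf{Revised plan.} Since $\inf Q=Y(S^3)$ is not attained, I instead take the initial datum to be a highly concentrated bubble with $Q(\theta_s)<Y(S^3)+\e$, where $\e$ is smaller than the gap
\[
\inf\{Q(\theta)-Y(S^3):\ \theta \text{ has constant Webster curvature}\}.
\]
Suppose $\theta(t)$ converged. Its limit $\theta_\infty$ would have constant curvature and $Q(\theta_\infty)\le Q(\theta_s)<Y(S^3)+\e$. By the choice of $\e$ this forces $Q(\theta_\infty)=Y(S^3)$ with $\theta_\infty$ a minimizer, which contradicts \cite{CMY2}. Analyzing via the concentration-compactness arguments of \cite{HSW} then shows that the flow must blow up with exactly one bubble, since the energy is below the two-bubble level.

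\textbf{Main obstacle.} The key difficulty is showing that the gap is positive, i.e.\ that constant Webster curvature forms on $S^3_s$ cannot have energy arbitrarily close to $Y(S^3)$. I would argue by contradiction. A sequence of solutions with $Q\to Y(S^3)$ either converges, yielding a minimizer (impossible), or blows up with one bubble. In the latter case, a refined Pohozaev/balancing expansion around the blow-up point, in the spirit of Schoen's compactness argument, shows that the leading-order obstruction is proportional to the mass $m_s$. Since $m_s\neq0$, isolated simple one-bubble blow-up for solutions is excluded, at least for $|s|$ small. Carrying out this local analysis on the Rossi sphere, with $s$-uniform estimates, is where I expect the real work to lie.
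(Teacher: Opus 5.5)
Your reduction of the theorem to an energy-gap statement is sound, and it is a genuinely different route from the paper's. Since $Q_s$ is non-increasing along the flow and the volume is preserved, any limit of the flow (even an $S^{1,2}$ limit) is a positive solution of $L_s u=r u^3$ with $Q_s(u_\infty)\le Q_s(u(0))$. So a flow started from a sufficiently concentrated bubble cannot converge, provided that, for the fixed $s$, there is $\varepsilon_s>0$ such that no constant Webster curvature contact form on $S^3_s$ has $Q_s\in(Y(S^3),Y(S^3)+\varepsilon_s)$. (Step 1 is wrong, as you noticed: with $m_s<0$, concentrated test functions have $Q_s>Y(S^3)$. Also, once the gap is known it gives the contradiction directly, without passing through ``$\theta_\infty$ is a minimizer''.)

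The problem is that this gap carries the entire content of the theorem, and you do not prove it. Your Pohozaev sketch asserts that the leading obstruction to one-bubble blow-up of solutions is proportional to $m_s$. That requires the interior terms of the Pohozaev identity to be of lower order after rescaling, i.e.\ that the Heisenberg bubble is an accurate enough approximate solution near the blow-up point. On $S^3_s$ this is exactly what is missing. Both \cite{CMY2} and the paper have to replace $\phi_{p,\lambda}$ by the Lyapunov--Schmidt corrected profile $\Phi_{p,\lambda}=\phi_{p,\lambda}+w_\lambda$, and $w_\lambda$ is controlled only by $\|w_\lambda\|\le Cs$, a bound that does not improve as $\lambda\to\infty$. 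A Schoen-type isolated blow-up analysis would therefore have to be carried out around the corrected profile, with control of all error terms. Nothing in the proposal does this, so as it stands the key step is an unproved claim.

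The gap can instead be closed with the paper's own key estimate. Suppose $u_k$ solve $L_s u_k=2u_k^3$ with $Q_s(u_k)\to Y(S^3)$. Non-attainment \cite{CMY2} and the Struwe-type decomposition force zero weak limit and exactly one bubble, so $u_k$ is $S^{1,2}$-close to some $\phi_{p_k,\lambda_k}$ with $\lambda_k\to\infty$. The Lyapunov--Schmidt reduction is uniform in $(p,\lambda)$, by conformal invariance of the $S^{1,2}$ norm, nondegeneracy of $L_0-6\phi_{p,\lambda}^2$ on $V_{p,\lambda}$, and $L_s-L_0=O(s)$. Its uniqueness part then gives $u_k=\Phi_{p_k,\lambda_k}$ with vanishing Lagrange multiplier, hence $\frac{d}{d\lambda}Q_s(\Phi_{p,\lambda})=0$ at $\lambda=\lambda_k$. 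This contradicts Proposition \ref{GradientEstimate} together with Proposition \ref{PropositionRossiMass} once $\lambda_k>\Lambda$.

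Completed this way, your argument avoids the invariant-set construction of Proposition \ref{PropositionLateralBoundary} and Lemma \ref{LemmaInvariant}, where the Morse index one of the linearized operator on $V_{p,\lambda}$ must be neutralized through volume preservation. The price is reliance on global compactness and Lyapunov--Schmidt uniqueness. It also yields only non-convergence (and one-bubble concentration through \cite{HSW}), not the concentration rate $\lambda(t)\gtrsim t^{1/4}$ of Lemma \ref{LastLemma}.
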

To our knowledge, this provides the first natural non-convergence
example for the CR Yamabe flow. Our construction shows that negative
pseudohermitian mass is not merely an obstruction to the convergence of the flow; it gives rise to an instability of the flow
itself, forcing the solution toward a bubbling regime.

Let us describe the main idea behind the proof.
 The proof of convergence uses a concentration-compactness principle to prove that, up to subsequences, a non converging solution must blow up by forming bubbles at a certain number of points; namely, that there exist sequences $x_i^{\alpha}\in M$ and $\e_i^{\alpha}>0$ for $\alpha=1,\ldots,N$ such that
$$u(t_i) - \sum_{\alpha=1}^N\overline{u}_{x_i^{\alpha},\e_i^{\alpha}} \to u_{\infty}$$
in the Sobolev-type space $S^{1,2}(M)$, where $\overline{u}_{x_i^{\alpha},\e_i^{\alpha}}$ is a family of blowing up solutions of the problem on $\H^n$ called bubbles, transferred to $M$ through appropriate local coordinates, and then proving that the number of such bubbles is zero. Therefore, the simplest possible blow-up that could happen is one in which $u_{\infty}\equiv 0$ and there is only one bubble.
Hence we look for a counterexample in a neighborhood of  a very concentrated bubble.
By \cite{CMY2} we know that, in order to perform fine estimates for the CR Yamabe functional on bubbles on Rossi spheres, it is necessary to apply the Lyapunov-Schmidt method to the four-dimensional manifold $\ci{M}\subseteq S^{1,2}(S^3)$ of bubbles, getting a perturbed manifold $\widetilde{\ci{M}}=\{\Phi_{p,\lambda}^s\;|\;p\in S^3,\lambda>0\}$.
Inspired by computations in \cite{CMY2}, we prove the following estimate for the CR Yamabe functional $Q_s$
\begin{equation}\label{FormulaIntro}
    \frac{\de}{\de\lambda}Q_s(\Phi_{p,\lambda}^s) = \frac{4}{3\pi}\frac{m_s}{\lambda^3} + O\left(\frac{s^2}{\lambda^4}\right)
\end{equation}
 which, thanks to the negativity of $m_s$ and together with $U(2)$ invariance, suggests that the CR Yamabe flow for Rossi spheres starting in $\widetilde{\ci{M}}$ should blow up.
But $\widetilde{\ci{M}}$ is not invariant under the CR Yamabe flow, and so we define a neighborhood of it in $S^{1,2}(S^3)$
$$\Omega=\left\{\Phi_{p,\lambda}^s+v\;\middle|\; v\in V_{p,\lambda}, \N{v}<C\frac{s^2}{\lambda^2}, \lambda>\Lambda,p\in S^3
%,verde{\int(\Phi_{p,\lambda}+v)^4=\int\Phi_{p,\lambda}^4}
\right\}$$
where $V_{p,\lambda}=(T_{\phi_{p,\lambda}}\ci{M})^{\perp}$. Since $\ci{M}$ is the set of critical points of $Q_0$ and the CR Yamabe flow is essentially the $L^2$ gradient flow of $Q_s$, the invariance of $\Omega$ is governed by the second differential of $Q_0$ on $V_{p,\lambda}$.
$D^2Q_0(\phi_{p,\lambda})$ has been studied by Malchiodi and Uguzzoni in \cite{MU} using the spectral analysis of the sublaplacian on the CR sphere by Folland in \cite{F}; it is not positive definite on $V_{p,\lambda}$, but it is nondegenerate with Morse index one. We overcome the difficulty caused by the negative eigenspace by using the fact that the CR Yamabe flow preserves the volume in order to essentially restrict the flow to a codimension-one submanifold where $D^2Q_0(\phi_{p,\lambda})$ becomes positive definite.
Once proven that $\Omega$ is invariant by the CR Yamabe flow, since estimate \eqref{FormulaIntro} holds on $\Omega$, the fact that the flow with initial condition in $\Omega$ diverges follows easily.

We believe that this counterexample highlights the role of the
pseudohermitian mass in the dynamics of the CR Yamabe flow. In the
Riemannian setting, this role is harder to isolate, since the Positive
Mass Theorem rules out the negative-mass mechanism exploited here.

\vspace{15pt}
%\textbf{Plan of the paper:} The paper is organized as follows. Section $2$ collects the preliminaries on CR geometry and the CR Yamabe flow.  Section $3$ is devoted to the proof of the main theorem.

\textbf{Acknowledgements.}
P.T.H. was supported  by the National Science and Technology Council (NSTC),
Taiwan, with grant Number: 114-2115-M-032 -003 -MY2\\
A.P. is a member of the Istituto Nazionale di Alta Matematica (INdAM), Gruppo Nazionale per l'Analisi Matematica, la Probabilità e le loro Applicazioni (GNAMPA), and is supported by the University of Trento, the MIUR-PRIN 2022 Project \emph{Regularity problems in sub-Riemannian structures}  Project code: 2022F4F2LH and the INdAM-GNAMPA 2025 Project \emph{Structure of sub-Riemannian hypersurfaces in Heisenberg groups}, CUP ES324001950001.\\
C.A. is supported by the French National Research Agency (ANR) project  EINSTEIN-PPF, grants ANR-23-CE40-0010.

\section{Preliminaries in CR geometry}\label{Section2}
We recall some notions about CR manifolds. For a complete introduction we refer to the standard monograph \cite{DT}.
We recall that a CR structure on a $2n+1$-dimensional manifold $M$ is a complex $n$-dimensional subbundle $\ci{H}\subset TM\otimes\C$ such that $\ci{H}\cap\overline{\ci{H}}=\{0\}$ and $[\Gamma(\ci{H}),\Gamma(\ci{H})]\subset\Gamma(\ci{H})$.
Every real $2n+1$-dimensional real submanifold of $\C^{n+1}$ inherits the CR structure
$$(TM\otimes\C)\cap\operatorname{span}\left\{\frac{\de}{\de z^1},\ldots,\frac{\de}{\de z^{n+1}}\right\}.$$
A CR structure is called nondegenerate if $H(M)=\g{Re}(\ci{H}\oplus\overline{\ci{H}})$ is a contact distribution; in such a case a contact form is called a pseudohermitian structure. The choice of a contact form $\theta$ induces a linear connection called the Tanaka-Webster connection, and a nondegenerate symmetric bilinear product $G_{\theta}$ on $H(M)$; the CR manifold is called pseudoconvex when $G_{\theta}$ is positive definite. By contracting twice the restriction of curvature tensor of the Tanaka-Webster connection to $H(M)$ through $G_{\theta}$, a scalar curvature invariant $R$ called the Webster curvature arises.

Since a contact form for a given CR structure is unique up to multiplication by a nowhere zero function, it is natural to study conformal changes. If we write $\widetilde{\theta}=u^{\frac{2}{n}}\theta$, the Webster curvature associated to $\widetilde{\theta}$ is given by
\begin{equation}\label{ConformalChangeCurvature}
 \widetilde{R} = u^{-\frac{n+2}{n}}\left(-b_n\Delta_b +R \right)u
\end{equation}
where $b_n=2+\frac{2}{n}$ and $\Delta_bu= u_{,\alpha\overline{\alpha}}+u_{,\overline{\alpha}\alpha}$ is a second order subelliptic differential operator called the sublaplacian.
The operator $L_{\theta}=-b_n\Delta_b+R$ is called the conformal sublaplacian, and it satisfies the conformal covariance law
\begin{equation}\label{TransformationSublaplacian}
L_{\widetilde{\theta}}\left(\frac{\phi}{u}\right) = u^{-\frac{n+2}{n}}L_{\theta}\phi.
\end{equation}
Furthermore the choice of a contact form induces a volume form $dV_{\theta}=\theta\wedge(d\theta)^n$, satisfying the conformal transformation law
\begin{equation}\label{TransformationVolumeForm}
dV_{\widetilde{\theta}} = u^{2\frac{n+1}{n}}dV_{\theta}.
\end{equation}

The problem of finding contact forms with constant Webster curvature is equivalent to finding critical points of the CR Yamabe functional
\begin{equation}\label{CRYamabeFunctional}
 Q_{\theta}(u) = \frac{\int\widetilde{R}d\widetilde{V}}{\Vol_{\widetilde{\theta}}(M)}=\frac{\int\left(b_n|\nabla_bu|^2+Ru^2\right)dV}{\left(\int u^{\frac{2n+2}{n}} dV\right)^{\frac{n}{n+1}}} = \frac{\int uL_{\theta}udV}{\left(\int u^{\frac{2n+2}{n}} dV\right)^{\frac{n}{n+1}}}.
\end{equation}
The most important CR manifold is the Heisenberg group $\H^n$, defined as $\C^n\times\R$ with the group law $(z,t)\cdot(w,s) = (z+w,t+s+2\g{Im}(z\cdot\overline{w}))$, endowed with the left-invariant CR structure generated by the left-invariant vector fields
$$Z_{\alpha}= \frac{\de}{\de z_{\alpha}} +i\overline{z}_{\alpha}\frac{\de}{\de t}$$
and the contact form $\Theta= dt+i\sum_{\alpha}\left( z^{\alpha}d\overline{z}^{\alpha}-\overline{z}^{\alpha}dz^{\alpha}\right)$. $\H^n$ admits the family of CR and group isomorphisms $\delta_{\lambda}(z,t)=(\lambda z,\lambda^2t)$ for $\lambda>0$, called dilations.

The other fundamental CR manifold is $S^{2n+1}$ with the CR structure induced by $\C^{n+1}$, and carries the unique (up to a constant) $U(n)$-invariant contact form
$$\theta= \frac{i}{2}(\de-\overline{\de})|z|^2 = \frac{i}{2}\sum_{k=1}^{n+1}z^kd\overline{z}^k-\overline{z}^kdz^k$$
whose Webster curvature is a constant $\overline{R}_n$ by $U(n)$ invariance.

$\H^n$ and $S^{2n+1}$ are locally CR equivalent: in fact the map $F:S^{2n+1}\setminus\{(-1,0,\ldots,0)\}\to\H^n$ given by
$$F(z^1,\ldots,z^n)= \left(\frac{z^1}{1+z^{n+1}},\ldots,\frac{z^1}{1+z^{n+1}},\g{Re}\left(i\frac{1+z^{n+1}}{1-z^{n+1}}\right)\right),$$
called the Cayley map, is a CR equivalence.
The respective contact forms are related by
$$(F^{-1})^*\theta= U^{\frac{2}{n}}\Theta$$
where
$$U(z,t)=\frac{c_n}{\left(t^2+(1+|z|^2)^2\right)^{\frac{n}{2}}},$$
which by equation \eqref{ConformalChangeCurvature} is a solution of $-b_n\Delta_{\H^n}U= \overline{R}_nU^{\frac{n+2}{n-2}}$
Using the translations $\tau_xy=x\cdot y$ and the dilations we can get other solutions (called "bubbles") by $U_{x,\lambda}=\lambda^nU\circ\delta_{\lambda}\circ\tau_{x^{-1}}$.

We can use the Cayley transform to transfer the bubbles to $S^{2n+1}$: in the case $n=1$ from calling $U_{\lambda}=U_{0,\lambda}$ we get conformal factors $\phi_{\lambda}$ such that
$$\phi_{\lambda}^2\theta=F^*(U_{\lambda}^2\Theta)$$
and therefore solving
\begin{equation}
 L_{S^3}\phi_{\lambda}= 2\phi_{\lambda}^3.
\end{equation}
By composing $\phi_{\lambda}$ with elements of $SU(2)$ we get a finite dimensional submanifold $\ci{M}$ of $S^{1,2}$. Each element of $\ci{M}$ (except the constant $1$) is determined by the parameter $\lambda>1$ of the bubble $\phi_{\lambda}$ from which it is obtained and its maximum point $p$; we denote such a bubble by $\phi_{p,\lambda}$. Denote by $X_k$ for $k=1,2,3$ the vector fields associated to some basis of $\g{su}(2)$ with respect to the action of $SU(2)$ on $S^3$, the tangent space of $\ci{M}$ at $\phi_{p,\lambda}$ is
$$\operatorname{span}\left\{X_1\phi_{p,\lambda}, X_2\phi_{p,\lambda},X_3\phi_{p,\lambda},\frac{\de\phi_{p,\lambda}}{\de\lambda}\right\}.$$

\vspace{8mm}

Given a CR manifold with a contact form $\theta_0$, the CR Yamabe flow is the flow of contact forms $\theta(t)$ for $t\in[0,\infty)$ defined as
$$\left\{\begin{matrix}
    \frac{\de\theta}{\de t}(t) = -(R_{\theta(t)}-r_{\theta(t)})\theta(t)\\
    \theta(0)=\theta_0.
\end{matrix}\right.$$
Writing $\theta(t)=u(t)^{\frac{2}{n}}\theta_0$, this is equivalent to
\begin{equation}\label{CRYamabeFlowEquationGenrlDim}
 \frac{\de u}{\de t} = \frac{n}{2}\frac{1}{u^{\frac{2}{n}}}\left(-L_{\theta_0}u +r_{\theta(t)}u^{\frac{n+2}{n}} \right)
\end{equation}
where
$$r_{\theta(t)}=\frac{\int_MR_{\theta(t)}dV_{\theta(t)}}{\Vol_{\theta(t)}(M)} = \frac{\int_Mu(t)L_{\theta_0}u(t)dV_{\theta_0}}{\int_Mu(t)^{\frac{2n+2}{n}}dV_{\theta_0}}.$$
Since the linearization of the right hand side of \eqref{CRYamabeFlowEquationGenrlDim} is a second order subelliptic operator, it is standard to prove short-time existence and uniqueness using the heat kernel for $L_{\theta}$ and Duhamel's formula; see \cite{Z} for a proof performed by adding $\e\Delta_{g}$ (the Laplace-Beltrami operator with respect to a certain metric) to $L_{\theta}$, and by proving uniform estimates for $\e\to 0$. By \cite{H-LTexistence} the solution exists for all $t\in[0,\infty)$.

From now on we always assume $n=1$; in particular the CR Yamabe flow equation becomes
\begin{equation}\label{CRYamabeFlowEquation}
 \frac{\de u}{\de t} = \frac{1}{2}\frac{1}{u^2}\left(-L_{\theta_0}u +r_{\theta(t)}u^3 \right)
\end{equation}
Using this equation and formula \eqref{TransformationVolumeForm} it is not hard to verify that the volume of $M$ is invariant under the flow (for a proof see Proposition 3.1 in \cite{H-LTexistence}).

The CR structure of $S^3$ is generated by the vector field
$$Z= \overline{z_2}\frac{\de}{\de z_1} - \overline{z_1}\frac{\de}{\de z_2}.$$

The Rossi spheres $S^3_s$ are defined as $S^3$ with the CR structure generated by $Z+\frac{s}{\sqrt{1+s^2}}\overline{Z}$.
We call $L_s$ the conformal sublaplacian associated to this CR structure and the contact form $\theta$ of $S^3$ defined above.

We define the Folland-Stein space $S^{1,2}$ % \rosso{(OR ANOTHER NAME?)}
as the subspace of $W^{1,2}_{\rm{loc}}(S^3)$ for which the norm
$$\N{u}_{S^{1,2}}^2 = \int_MuL_0u$$
is finite. $S^{1,2}$ is a Hilbert space with the scalar product
$$\bra u,v\ket = \int_{S^3}uL_0v.$$
Notice that the norms $\N{u}_s^2 = \int uL_su$ are equivalent for $|s|\le M$ for any $M<1$, and we choose to use a fixed norm for convenience.

In \cite{JL} Jerison and Lee defined coordinates for a neighborhood of a point of a pseudoconvex CR manifold taking value in $\H^n$ and analogous to conformal normal coordinates in Riemannian geometry; these coordinates are called CR normal coordinates.

 Now suppose that the CR structure is of positive CR Yamabe class, that is, that $L_{\theta}$ is positive definite; then, given a point $p\in M$, $L_{\theta}$ has a Green function $G_p$ defined as the solution of $L_sG_p=64\pi\delta_p$.
 On $\H^1$ the Green function is given by $\frac{2}{\rho^2}$, where $\rho=(|z|^4+t^2)^{\frac{1}{4}}$ is called the Kor\'anyi norm; 
 it was proved in Proposition 5.2 in \cite{CMY1} that in general $G_p$ has the expansion in CR normal coordinates
 \begin{equation}\label{ExpansionGreenFunction}
  G_p=\frac{2}{\rho^2} + A + O(\rho)
 \end{equation}
 where $A$ is a constant.
 Similarly to Riemannian geometry, the constant $A$ is related to asymptotic geometry: the CR manifold $M\setminus\{p\}$ with the contact form $G_p^2\theta$ is asymptotically flat, in the sense that outside of a compact set it has coordinates with values in $H^1$ which approximate it well enough (see Definition 2.1 in \cite{CMY1} for a precise definition); those manifolds admit an invariant called pseudohermitian mass defines as
 $$m = i\lim_{R\to\infty}\int_{S_R}\omega^1_1\wedge\theta$$
 (see \cite{CMY1} for the notation), and the constant $A$ above is related to $m$ by
 \begin{equation}\label{MassGreenRelation}
     m=12\pi A.\footnote{In this paragraph there are differences in constants with \cite{CMY1} because we used the definition of the Green function as in \cite{CMY2}.}
 \end{equation}
In \cite{CMY2} the authors performed this computation for the Rossi spheres with small parameter $s$.

\begin{proposizione}\label{PropositionRossiMass}
 Let $m_s$ be the pseudohermitian mass of the manifold obtained from the Rossi sphere $S^3_s$ through the process defined above. Then, for $s\to 0$,
 $$m_s=-18\pi s^2+o(s^2).$$
\end{proposizione}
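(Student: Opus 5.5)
This proposition is the result of Cheng, Malchiodi and Yang in \cite{CMY2}, so here we only outline how the computation goes. By \eqref{MassGreenRelation} it is enough to expand the constant term $A_s$ of the Green function $G_p^s$ of $L_s$ to second order in $s$. By $U(2)$-invariance of the Rossi structures, $A_s$ does not depend on $p$, so we may fix $p$. We then write the Green function as a perturbation of the one on the standard sphere. Let $G^0_p$ be the Green function of $L_0$. Via the Cayley map it is explicit: up to normalization it equals $|1-\bar p\cdot z|^{-1}$, and in CR normal coordinates its constant term is $A_0=0$, because the standard sphere has zero mass. Write $L_s=L_0+sL^{(1)}+s^2L^{(2)}+O(s^3)$. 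The first-order term $L^{(1)}$ involves $Z^2+\overline Z^2$, i.e.\ the torsion-type deformation, and the second-order term collects the quadratic corrections to the sublaplacian and to the Webster curvature. Then
\[
G^s_p=G^0_p+sG^{(1)}+s^2G^{(2)}+O(s^3),\qquad L_0G^{(1)}=-L^{(1)}G^0_p,\qquad L_0G^{(2)}=-L^{(1)}G^{(1)}-L^{(2)}G^0_p .
\]

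\textbf{Step 1: first order.} Solve for $G^{(1)}$ explicitly using spherical harmonics on $S^3$, namely Folland's decomposition \cite{F} into the spaces $H_{j,k}$, on which $L_0$ acts diagonally. The term $L^{(1)}G^0_p$ lies in the components that shift the bidegree by $(\pm2,\mp2)$, so the first-order correction to the constant term vanishes by symmetry.

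\textbf{Step 2: CR normal coordinates.} The normal coordinates of \cite{JL} at $p$ for $S^3_s$ differ from the Cayley coordinates at order $s$. The expansion \eqref{ExpansionGreenFunction} has to be computed in the correct coordinates. Otherwise the Kor\'anyi-norm term $2/\rho^2$ is not exactly matched, and spurious constants of order $s^2$ appear, coming from cross terms between the $O(s)$ change of coordinates and $G^{(1)}$.

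\textbf{Step 3: second order.} Compute the constant term of $G^{(2)}$ at $p$. One way is to pair the equation for $G^{(2)}$ against $G^0_p$ and use Green's formula. This expresses $A^{(2)}$ as a regularized integral
\[
\int_{S^3}\Big(L^{(1)}G^{(1)}+L^{(2)}G^0_p\Big)\,G^0_p,
\]
together with the coordinate corrections from Step 2. This integral is evaluated with the Folland coefficients and gives $A^{(2)}=-\tfrac32$, hence $m_s=12\pi A_s=-18\pi s^2+o(s^2)$.

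\textbf{Main obstacle.} The hardest part is Steps 2 and 3 together. The singular parts must be tracked precisely, since both $G^0_p$ and $G^{(1)}$ blow up at $p$ and the regularized integral must be correctly renormalized. Normalization constants must also be tracked, because a sign or factor error directly changes the claimed value $-18\pi$. The remainder is controlled uniformly by the equivalence of the norms $\N{\cdot}_s$ for small $s$ and by subelliptic estimates, which yield the $o(s^2)$ error.
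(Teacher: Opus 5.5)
Your treatment agrees with the paper's. The paper gives no proof of this proposition and simply imports $m_s=-18\pi s^2+o(s^2)$ from \cite{CMY2}, and your argument likewise rests on that citation: the perturbative outline asserts, rather than derives, the decisive value $A^{(2)}=-\tfrac32$ (which is the value consistent with $m=12\pi A$).

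Two small corrections to the outline. First, $S^3_s$ is only $SU(2)$-invariant, not $U(2)$-invariant: the Reeb circle $z\mapsto e^{i\phi}z$ sends $Z+c\overline{Z}$ to a multiple of $Z+ce^{-4i\phi}\overline{Z}$. $SU(2)$-transitivity is nonetheless enough to make $A_s$ independent of $p$. Second, the vanishing of the first-order term follows most cleanly from a symmetry: the map $(z_1,z_2)\mapsto(z_1,iz_2)$ preserves $\theta$, fixes $(1,0)$ and maps $S^3_s$ onto $S^3_{-s}$, so $m_s$ is even in $s$.
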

 
\section{Proof of Theorem \ref{MainTheorem}}
We recall Proposition 5.2 from \cite{CMY2}.

\begin{proposizione} 
%For $\phi_{\lambda}$ defined \rosso{ABOVE}
There exists a unique $w_\lambda \in S^{1,2}(S^3)$, depending 
smoothly on $\lambda$, such that $\|w_\lambda\|_{S^{1,2}(S^3)} \leq C \, s$ and satisfies 
\begin{equation*}
\int_{S^3} \phi_{\lambda}^2 \frac{\de\phi_{\lambda}}{\de\lambda} \, w_\lambda  \, 
   \theta\wedge d\theta = 0; \qquad \qquad  L_s (\phi_{\lambda} + w_\lambda)  - 2 
   (\phi_{\lambda} + w_\lambda)^3 
   = \ell \, \phi_\lambda^2 \frac{\de\phi_{\lambda}}{\de\lambda} 
\end{equation*}
for some $\ell \in \R$.
\end{proposizione}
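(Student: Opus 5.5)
This is a Lyapunov--Schmidt reduction along the one-parameter family $\{\phi_\lambda\}$, and I will obtain $(w_\lambda,\ell)$ from the implicit function theorem applied at $s=0$. Define
$$F(s,\lambda,w,\ell)=L_s(\phi_\lambda+w)-2(\phi_\lambda+w)^3-\ell\,\phi_\lambda^2\frac{\de\phi_\lambda}{\de\lambda}.$$
I regard it as a map from $\R\times(1,\infty)\times W_\lambda\times\R$ to the dual $(S^{1,2})^*$. Here $W_\lambda=\{w\in S^{1,2}:\int\phi_\lambda^2\de_\lambda\phi_\lambda\, w=0\}$. I identify the dual with $S^{1,2}$ through $L_0^{-1}$.

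\textbf{Smoothness.} The cubic term is smooth on $S^{1,2}$ thanks to the Folland--Stein embedding $S^{1,2}\hookrightarrow L^4$, which is the critical embedding in dimension three. The map $s\mapsto L_s$ is smooth, with $L_s-L_0=O(s)$ as an operator $S^{1,2}\to(S^{1,2})^*$, because the structure $Z+\frac{s}{\sqrt{1+s^2}}\overline Z$ depends smoothly on $s$. At $s=0$ we have $F(0,\lambda,0,0)=0$, since $L_0\phi_\lambda=2\phi_\lambda^3$.

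\textbf{Linearization.} The partial differential in $(w,\ell)$ at $(0,\lambda,0,0)$ is
$$(w,\ell)\mapsto L_0w-6\phi_\lambda^2w-\ell\,\phi_\lambda^2\de_\lambda\phi_\lambda.$$
By Malchiodi--Uguzzoni and Folland's spectral analysis, the kernel of $L_0-6\phi_\lambda^2$ is exactly $T_{\phi_\lambda}\ci M$. This is the four-dimensional space spanned by $X_k\phi_\lambda$ and $\de_\lambda\phi_\lambda$. Moreover the operator is Fredholm of index zero, being a compact perturbation of $L_0$.

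Here the difficulty appears: one scalar constraint and one Lagrange multiplier cannot handle a four-dimensional kernel. I resolve this by symmetry. The Rossi structures, $\theta$, and $\phi_\lambda$ are all invariant under the $U(1)$ (indeed circle) subgroup fixing the pole. So I work in the closed subspace of functions invariant under the stabilizer of $p$ in the symmetry group of $S^3_s$. On this subspace the $X_k\phi_\lambda$ should drop out, being non-invariant, and the kernel reduces to $\R\,\de_\lambda\phi_\lambda$. I should check that the structure-preserving group (a $U(1)\times$ discrete group for Rossi spheres) does kill these directions. If it does not, I would instead impose orthogonality to all four tangent directions with four multipliers and note that, by symmetry, only the $\de_\lambda$ multiplier survives.

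\textbf{Invertibility and conclusion.} Restricted to $W_\lambda$, the linear map $(w,\ell)\mapsto(L_0-6\phi_\lambda^2)w-\ell\,\phi_\lambda^2\de_\lambda\phi_\lambda$ is then an isomorphism. It is injective because pairing with $\de_\lambda\phi_\lambda$ kills the first term and gives $\ell\int\phi_\lambda^2(\de_\lambda\phi_\lambda)^2=0$, hence $\ell=0$; then $w\in\ker\cap W_\lambda=\{0\}$. It is surjective by Fredholm index counting. The implicit function theorem then gives unique $(w_\lambda,\ell)$ for small $|s|$, smooth in $\lambda$, with $\|w_\lambda\|\le C\|F(s,\lambda,0,0)\|\le Cs$.

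\textbf{Main obstacle.} The main obstacle is uniformity in $\lambda\to\infty$, needed so that $C$ and the size of the $s$-neighbourhood do not depend on $\lambda$. Uniformity holds for the unperturbed part: conjugating by the Cayley transform and the dilations, which are $S^{1,2}$-isometries, maps $L_0-6\phi_\lambda^2$ to the fixed operator for $\lambda=1$. So the inverse bound is uniform, and likewise the bound on the nonlinear remainder. What remains is to show $\|(L_s-L_0)\phi_\lambda\|_{(S^{1,2})^*}\le Cs$ uniformly in $\lambda$. This follows from $|\langle(L_s-L_0)u,v\rangle|\le Cs\|u\|\|v\|$ together with $\|\phi_\lambda\|=$ const, since $L_s-L_0$ involves only first derivatives along $Z,\overline Z$ with bounded coefficients, plus the curvature difference.
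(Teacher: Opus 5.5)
The paper gives no argument of its own here; it quotes Proposition 5.2 of \cite{CMY2}. Your proposal therefore has to stand on its own. The implicit-function framework is the right one. Your uniformity argument is also fine: you conjugate by dilations, which act isometrically on $S^{1,2}$, and use $|\langle (L_s-L_0)u,v\rangle|\le Cs\N{u}\N{v}$. The gap is the step meant to cut the four-dimensional kernel $T_{\phi_\lambda}\ci{M}$ of $L_0-6\phi_\lambda^2$ down to $\R\,\de_\lambda\phi_\lambda$. Without it the linearization on $W_\lambda\times\R$ is not injective, and the implicit function theorem does not apply.

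\textbf{The circle does not exist.} Let $p=(0,1)$ be the centre of $\phi_\lambda$ and $t=s/\sqrt{1+s^2}$. The rotation $(z_1,z_2)\mapsto(e^{i\alpha}z_1,z_2)$ pushes $Z$ forward to $e^{i\alpha}Z$, hence $Z+t\overline{Z}$ to $e^{i\alpha}(Z+te^{-2i\alpha}\overline{Z})$. It therefore preserves the Rossi structure only when $e^{2i\alpha}=1$. The structure is invariant under the standard $SU(2)$-action, which is simply transitive. Among unitary maps fixing $p$, only $1$ and $\sigma(z_1,z_2)=(-z_1,z_2)$ preserve it.

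\textbf{Even that circle would not help.} Any map fixing $p$ and preserving $\theta$ fixes the Reeb vector at $p$. Hence the kernel element $\kappa=\frac{d}{d\beta}\big|_{\beta=0}\phi_\lambda(z_1,e^{i\beta}z_2)\in T_{\phi_\lambda}\ci{M}$, which slides the centre along the Hopf fibre through $p$, is invariant. So the invariant kernel is at least two-dimensional and meets $W_\lambda$ nontrivially, and your injectivity step fails. Your fallback (``by symmetry only the $\de_\lambda$ multiplier survives'') breaks at the same point.

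\textbf{What is missing is a symmetry reversing $\theta$.} Complex conjugation $c(z_1,z_2)=(\overline{z_1},\overline{z_2})$ satisfies $c^*\theta=-\theta$ and $c_*(Z+t\overline{Z})=\overline{Z}+tZ$, so it is anti-CR. It preserves $\theta\wedge d\theta$ and the Levi metric. It therefore commutes with $L_s$, since the Webster curvature of $(S^3_s,\theta)$ is constant by $SU(2)$-homogeneity. It also fixes $\phi_\lambda$.

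Now $d\sigma_p$ acts as $-1$ on the contact plane, while $dc_p$ reverses the Reeb direction. So no nonzero vector of $T_pS^3$ is fixed by both, and on $\langle\sigma,c\rangle$-invariant functions the kernel is exactly $\R\,\de_\lambda\phi_\lambda$. The nonlinear map preserves this subspace, and the dilations commute with $\sigma$ and $c$. From there your argument goes through, giving uniqueness within this invariant class.
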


Let
%$$V_{p,\lambda}=\operatorname{span}\left\{\frac{\de\phi_{a,\lambda}}{\de\lambda},\frac{\de\phi_{a,\lambda}}{\de a_i}\right\}^{\perp}$$
%\rosso{(define more clearly $\frac{\de\phi_{a,\lambda}}{\de a_i}$)},
$$V_{p,\lambda}=\operatorname{span}\left\{X_1\phi_{p,\lambda}, X_2\phi_{p,\lambda},X_3\phi_{p,\lambda},\frac{\de\phi_{p,\lambda}}{\de\lambda}\right\}^{\perp}$$
that is the orthogonal in $S^{1,2}$ of the tangent space of $\ci{M}$ at $\phi_{p,\lambda}$.

Let $\Phi_{p,\lambda}=\phi_{p,\lambda} + w_\lambda$. $\Phi_{p,\lambda}$ depends on $s$, as $w_\lambda$ does, but we will omit this dependence unless necessary.

Define
$$\Omega=\left\{\Phi_{p,\lambda}+v\;\middle|\; v\in V_{p,\lambda}, \N{v}<C\frac{s^2}{\lambda^2}, \lambda>\Lambda,p\in S^3
%,verde{\int(\Phi_{p,\lambda}+v)^4=\int\Phi_{p,\lambda}^4}
\right\}$$
for some $\Lambda$ (which will be chosen big enough in order that the following propositions hold).
%\verde{Notice that the last condition by \rosso{(PUT IN INTRO)} is invariant by the CR Yamabe flow.}

\begin{proposizione}\label{PropositionLateralBoundary}
 Let $u(t)$ be a solution of the CR Yamabe flow such that $u(t)\in\Omega$ for $t\in[0,t_0)$. Then, for $s$ sufficiently small
 $$u(t_0)\not\in\left\{\Phi_{p,\lambda}+v\;\middle|\; v\in V_{p,\lambda}, \N{v}=C\frac{s^2}{\lambda^2}, \lambda\ge\Lambda,p\in S^3 \right\}.$$
\end{proposizione}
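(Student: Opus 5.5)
We argue by contradiction. Suppose $u(t_0)=\Phi_{p_0,\lambda_0}+v_0$ with $v_0\in V_{p_0,\lambda_0}$ and $\N{v_0}=C\frac{s^2}{\lambda_0^2}$. For $t$ close to $t_0$ we write $u(t)=\Phi_{p(t),\lambda(t)}+v(t)$ with $v(t)\in V_{p(t),\lambda(t)}$. This decomposition is possible, and depends smoothly on $t$, because the map $(p,\lambda,v)\mapsto\Phi_{p,\lambda}+v$ is a local diffeomorphism near $\ci{M}$. This follows from the implicit function theorem, since $w_\lambda$ is small and the $\lambda$-derivative of $\phi_{p,\lambda}$ has norm of order $1/\lambda$ in the natural scaling. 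We consider $f(t)=\N{v(t)}^2\lambda(t)^4$. Since $f<C^2s^4$ on $[0,t_0)$ and $f(t_0)=C^2s^4$, we must have $f'(t_0)\ge 0$. The goal is to show $f'(t_0)<0$.

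\textbf{Step 1: the derivative of $\N{v}^2$.} We compute $\frac{d}{dt}\N{v}^2=2\bra v,\de_t u\ket-2\bra v,\de_t\Phi_{p,\lambda}\ket-2\bra v,\text{(rotation of }V_{p,\lambda})\ket$. Because $v\perp T\ci{M}$, the last two terms reduce to terms bounded by $\N{v}^2\,|\dot p,\dot\lambda/\lambda|$ plus $\N{v}\N{\de w_\lambda}\,|\dot\lambda|$. The main term is $\bra v,\de_t u\ket=\int L_0v\cdot\frac{1}{2u^2}(-L_su+r u^3)$.

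\textbf{Step 2: linearisation.} We expand $-L_su+ru^3$ around $\Phi_{p,\lambda}$. By the Proposition recalled from \cite{CMY2}, $L_s\Phi-2\Phi^3=\ell\phi^2\de_\lambda\phi$, and this term is almost orthogonal to $V_{p,\lambda}$. The $r$-correction is controlled using $r-2=O(\text{energy defect})$. What remains is
$$-\big(L_s-3r\Phi^2\big)v+O(\N{v}^2)+\text{(projection error)}.$$
After weighting by $u^{-2}\approx\phi_{p,\lambda}^{-2}$, the leading part becomes $-D^2Q_0(\phi_{p,\lambda})[v,v]$, up to positive constants. The difference between $L_s$ and $L_0$ is $O(s)$. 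We also need to show that the $v$-independent forcing terms contribute at most $o(s^2/\lambda^2)\N{v}$. This should follow from the estimates behind \eqref{FormulaIntro}, namely that $\ell=O(s^2/\lambda^3)$ and that the tangential components of the error are of order $s^2/\lambda^2$. Uniformity in $p$ holds by $U(2)$ (or $SU(2)$) invariance of $L_s$ and $\theta$.

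\textbf{Step 3: coercivity (the main obstacle).} By \cite{MU}, $D^2Q_0(\phi_{p,\lambda})$ restricted to $V_{p,\lambda}$ is nondegenerate with Morse index one, and the negative direction is essentially $\phi_{p,\lambda}$ itself. To remove it we use volume preservation: $\int u(t)^4=\int u(0)^4$. We choose the initial datum so that $\int u^4=\int\Phi_{p,\lambda}^4$ up to $O(s^4/\lambda^4)$. Expanding then gives $\int\Phi^3v=O(\N{v}^2)$, so the component of $v$ along $\phi_{p,\lambda}$ is of order $\N{v}^2$. On the complement we have $D^2Q_0[v,v]\ge c\N{v}^2$. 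Hence
$$\frac{d}{dt}\N{v}^2\le -c\N{v}^2+o\!\left(\frac{s^2}{\lambda^2}\right)\N{v}.$$

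\textbf{Step 4: conclusion.} By \eqref{FormulaIntro} and the gradient-flow structure, $\dot\lambda/\lambda=O(s^2/\lambda^2)$ is small, so the derivative of $\lambda^4$ only contributes a term $O(s^2)f$. At $t_0$, where $\N{v}=Cs^2/\lambda^2$ and $C$ is chosen large, this gives $f'(t_0)\le-(c/2)\lambda^4\N{v}^2<0$, a contradiction.

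The delicate points I expect are two. The first is making the weight $u^{-2}$ (which is not the $S^{1,2}$ weight) compatible with the coercivity estimate; I would handle it by rewriting everything in the geometry of $\phi_{p,\lambda}^2\theta$, where the problem becomes the round sphere. The second is the index-one direction, which I would handle through the volume constraint as above.
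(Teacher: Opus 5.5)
Your plan has the same architecture as the paper's proof. You expand the flow around $\Phi_{p,\lambda}$, absorb the weight $u^{-2}$ by passing to $(\Phi+v_0)^2\theta$, use the index-one structure from Malchiodi--Uguzzoni, and handle the bad direction through the volume. Tracking $f=\lambda^4\N{v}^2$ instead of $\N{v}^2$ is a real improvement, since the radius $Cs^2/\lambda^2$ moves with $t$. The gap is in Step 3: the volume of the reference profile is not constant in $\lambda$. The orthogonality condition on $w_\lambda$ and $\int\phi_\lambda^3\de_\lambda\phi_\lambda=0$ give $\int\Phi L_s\Phi=2\int\Phi^4$, hence $\int\Phi_{p,\lambda}^4=\frac14 Q_s(\Phi_{p,\lambda})^2$. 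Proposition \ref{GradientEstimate} then gives $\int\Phi_{p,\lambda}^4=V_\infty+\kappa_s\lambda^{-2}+O(s^2\lambda^{-3})$, where $V_\infty=\lim_{\lambda\to\infty}\int\Phi_{p,\lambda}^4$ and $\kappa_s\sim s^2$ is, to leading order, a nonzero multiple of $m_s$. So if you normalize $\int u(0)^4$ to $\int\Phi^4_{p(0),\lambda(0)}$, conservation of volume gives at $t_0$ the relation $4\int\Phi^3v_0=\kappa_s\bigl(\lambda(0)^{-2}-\lambda(t_0)^{-2}\bigr)+O\bigl(s^2\lambda(0)^{-3}+\N{v_0}^2\bigr)$. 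This is of size $s^2\lambda(0)^{-2}$ once $\lambda(t_0)$ exceeds $\lambda(0)$ by a fixed factor, not $O(\N{v_0}^2)$. Worse, since $|\int\Phi^3v_0|\lesssim\N{v_0}\le Cs^2\lambda(t_0)^{-2}$, your normalization actually forces the solution onto the lateral boundary when $\lambda(t_0)/\lambda(0)$ is large. The bound you need is therefore false, not merely unproved.

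The underlying reason is that $\Phi$ is a \emph{neutral}, not an unstable, direction of the flow. $Q_0$ is scale invariant, so $\phi_{p,\lambda}\in V_{p,\lambda}$ lies in the kernel of $D^2Q_0(\phi_{p,\lambda})$; it is $L_0-6\phi_{p,\lambda}^2$, not $D^2Q_0$, that has index one. Equivalently, $u\mapsto c\,u(c^{-2}t)$ maps solutions to solutions. In your Step 2 this shows up as the first-order part $r-2\approx-4\int\Phi^3v/\int\Phi^4$. That term belongs to the linearization rather than to the error, and it exactly cancels the growth of the $\Phi$-mode. Hence the $\Phi$-component of $v$ is never damped and is fixed only by the conserved volume. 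Since the Proposition as stated has no such hypothesis, one must be added: a solution can remain in $\Omega$ with $\lambda\to\infty$ only if $\int u(0)^4=V_\infty$, because $|\int u^4-\int\Phi_{p,\lambda}^4|\lesssim\N{v}$. With $\int u(0)^4=V_\infty$ you get $\int\Phi^3v_0=O(s^2\lambda^{-2})$ with a constant independent of $C$. On the lateral boundary the $\Phi$-component is then $O(\N{v_0}/C)$, and your coercivity estimate closes for $C$ large. Do not replace this by the paper's route, which takes the control from the identity $\frac{d}{dt}\int u^4=2\int u(-L_su+r_{t_0}u^3)=0$ at $t_0$. That identity holds for every $u$ by the definition of $r_{t_0}$: the dropped term $(r_{t_0}-2)\int u^4\approx-4\int\Phi^3v_0$ cancels the main term. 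The information has to come from the conserved value of the volume, as you intended, but with the $\lambda$-independent normalization $V_\infty$.
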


\begin{proof}
 By hypothesis $u(t)=\Phi_{p(t),\lambda(t)}+v(t)$ with $\N{v(t)}<C\frac{s^2}{\lambda^2}$ and $\N{v(t_0)}=C\frac{s^2}{\lambda^2}$, which implies that
 \begin{align}
 \nonumber
 &0\le\left.\frac{d}{dt}\right|_{t=t_0}\N{v(t)}^2=2\bra v(t_0),\frac{\de v}{\de t}(t_0)\ket\\
 \nonumber
 &=2\bra v(t_0),\frac{\de u(t_0)}{\de t}-\frac{\de\Phi_{p(t_0),\lambda(t_0)}}{\de p}p'(t_0) -\frac{\de\Phi_{p(t_0),\lambda(t_0)}}{\de \lambda}\lambda'(t_0)\ket\\
\label{FormulaLemma1ter}
 &= 2\bra v(t_0),\frac{\de u(t_0)}{\de t}\ket + O\left(C\frac{s^4}{\lambda^4}\right).
 \end{align}
 But using the equation of the CR Yamabe flow \eqref{CRYamabeFlowEquation}, and setting \[\Phi=\Phi_{p(t_0),\lambda(t_0)},\ u=u(t_0)\ \mbox{and}\ v_0=v(t_0)\] we have 
 \begin{align}
 \nonumber
 &\bra v(t_0),\frac{\de u(t_0)}{\de t}\ket = \int \frac{\de u(t_0)}{\de t} L_sv(t_0)\\
 \nonumber
 &=\frac{1}{2}\int\frac{1}{u^2}\left(-L_{\theta}u +r_{t_0}u^3\right)L_sv(t_0)\\
 \nonumber
&=\frac{1}{2}\int\frac{1}{(\Phi+v_0)^2}\left(-L_s(\Phi+v_0) +r_{t_0}(\Phi+v_0)^3\right)L_sv_0\\
\label{FormulaLemma1quater}
 &=\frac{1}{2}\int\frac{1}{(\Phi+v_0)^2}\left(-L_sv_0 +r_{t_0}(\Phi+v_0)^3-2\Phi^3\right)L_sv_0
\end{align}
We have that
$$r_{t_0}=\frac{\int_MR_{\theta(t)}dV_{\theta(t)}}{\int_MdV_{\theta(t)}} = \frac{\int_MuL_sudV_0}{\int_Mu^4dV_0}=2+O\left(\frac{s^2}{\lambda^2}\right)$$
by \cite[Section B]{CMY2}. Then formula \eqref{FormulaLemma1quater} becomes
$$\bra v(t_0),\frac{\de u(t)}{\de t}\ket =$$
$$=\frac{1}{2}\int\frac{1}{(\Phi+v_0)^2}\left(-L_{\theta}v_0 +6(\Phi+v_0)^2v_0\right)L_{\theta}v_0 +O\left(C\frac{s^4}{\lambda^4}\right) .$$
Now let $\widetilde{\theta}=(\Phi+v_0)^2\theta$.
Then by formulas \eqref{TransformationSublaplacian} and \eqref{TransformationVolumeForm} $dV_{\widetilde{\theta}}=(\Phi+v_0)^4dV_{\theta}$ and
$$L_{\widetilde{\theta}}\left(\frac{v_0}{\Phi+v_0}\right)=(\Phi+v_0)^{-3}L_{\theta}v_0,$$
therefore we get
\begin{align}
\nonumber
&\bra v(t_0),\frac{\de u(t)}{\de t}\ket\\
\label{FormulaProofLemma1}
&=\frac{1}{2}\int\left(-L_{\widetilde{\theta}}\left(\frac{v_0}{\Phi+v_0}\right) +6\frac{v_0}{\Phi+v_0}\right)L_{\widetilde{\theta}}\left(\frac{v_0}{\Phi+v_0}\right)d\widetilde{V} +O\left(C\frac{s^4}{\lambda^4}\right).
\end{align}
%But $$\int\left(-L_{\widetilde{\theta}}\left(\frac{v_0}{\Phi+v_0}\right) +3r_0\frac{v_0}{\Phi+v_0}\right)L_{\widetilde{\theta}}\left(\frac{v_0}{\Phi+v_0}\right)$$
%$$\le -AC^2\N{\frac{v_0}{\Phi+v_0}}_{...} = -O\left(C^2\frac{s^4}{\lambda^4}\right)$$
%and therefore, by choosing $C$ large enough in the definition of $\Omega$, we get a contradiction 

Since the CR Yamabe flow preserves the total volume, we have
$$0 = \left.\frac{d}{dt}\right|_{t=t_0}\int u(t)^4 = 4\int u(t_0)^3\frac{\de u}{\de t}(t_0)=$$
$$=2\int u(t_0)\left(-L_su(t_0) + r_{t_0}u(t_0)^3\right).$$
Reasoning as above we get
$$0= \int(\Phi+v_0)\left(-L_sv_0 + 6\Phi^2v_0\right) +O\left(C^2\frac{s^4}{\lambda^4}\right) =$$
$$= \int\Phi\left(-L_sv_0 + 6\Phi^2v_0\right) +O\left(C^2\frac{s^4}{\lambda^4}\right) =$$
$$= -\int\Phi L_sv_0 + 6\int\Phi^3v_0 +O\left(C^2\frac{s^4}{\lambda^4}\right) =$$
$$= -\int\Phi L_sv_0 -3\int L_s\Phi v_0 +O\left(C^2\frac{s^4}{\lambda^4}\right) =$$
\begin{equation}\label{FormulaProofLemma1bis}
 =-4\int L_{\widetilde{\theta}}\frac{v(t)}{\Psi+v_0}d\widetilde{V} +O\left(C^2\frac{s^4}{\lambda^4}\right)=-4\bra 1,\frac{v(t)}{\Psi+v_0}\ket_{X} +O\left(C^2\frac{s^4}{\lambda^4}\right)
\end{equation}
where $X$ is the space where the norm
$$\N{u}_X^2 = \int uL_{\widetilde{\theta}}udV_{\widetilde{\theta}}$$
with the obvious Hilbert product.

It follows from the proof of Lemma 5 in \cite{MU}, despite not explicitly being stated there, that the operator $L_{\theta}-6\phi_{p,\lambda}^2$ is nondegenerate on $V_{p,\lambda}$ with Morse index one, and by conformal covariance and continuity also $L_{\widetilde{\theta}}-6$ is. Since $(L_{\widetilde{\theta}}-6)1 = -4$, formulas \eqref{FormulaProofLemma1} and \eqref{FormulaProofLemma1bis} imply that
$$\bra v(t_0),\frac{\de u(t)}{\de t}\ket< -\N{v}^2 + O\left(C\frac{s^4}{\lambda^4}\right) = -C^2\frac{s^4}{\lambda^4} + O\left(C\frac{s^4}{\lambda^4}\right);$$
but choosing $C$ large enough in the definition of $\Omega$ and also $\Lambda$ large enough, we get a contradiction with formula \eqref{FormulaLemma1ter}.
\end{proof}

\begin{proposizione}\label{GradientEstimate}
 In $\Omega$ the CR Yamabe functional (defined in \eqref{CRYamabeFunctional}) for the Rossi sphere $Q_s$ satisfies the estimate
 $$\frac{\de}{\de\lambda}Q_s(\Phi_\lambda +v) = \frac{4}{3\pi}\frac{m_s}{\lambda^3} + O\left(\frac{s^2}{\lambda^4}\right).$$
\end{proposizione}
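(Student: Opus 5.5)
The estimate follows by first proving it on the perturbed manifold $\widetilde{\ci{M}}$ (i.e. for $v=0$) and then checking that the perturbation $v$, which is small in $\Omega$, only produces an $O(s^2/\lambda^4)$ error. By $U(2)$-invariance of the Rossi structures and of $\theta$, we may take $p$ fixed and write $\Phi_\lambda=\phi_\lambda+w_\lambda$.

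\textbf{Step 1: the case $v=0$.} The reduced energy $\lambda\mapsto Q_s(\Phi_\lambda)$ is expanded following the computations of \cite{CMY2} (their Section B and the proof of Proposition 5.2). Away from the concentration point, the bubble $\phi_\lambda$ rescaled by $\lambda$ approximates a multiple of the Green function $G_p$ of $L_s$. In CR normal coordinates the expansion \eqref{ExpansionGreenFunction} then gives an energy of the form
\[
Q_s(\Phi_\lambda)=Q_0(\phi_\lambda)+c\,\frac{A_s}{\lambda^2}+O\Big(\frac{s^2}{\lambda^3}\Big),
\]
with $Q_0(\phi_\lambda)$ constant (it equals the Yamabe constant of the standard sphere). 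The correction $w_\lambda$ accounts for the $O(s)$ deformation of the structure near $p$. Using \eqref{MassGreenRelation} to convert $A_s=m_s/(12\pi)$, differentiating in $\lambda$, and tracking the constants in \cite{CMY2} should produce the leading term $\frac{4}{3\pi}\frac{m_s}{\lambda^3}$. To justify differentiating the error, I will use that $w_\lambda$ depends smoothly on $\lambda$ with $\N{\de_\lambda w_\lambda}=O(s/\lambda)$, so that one $\lambda$-derivative costs one power of $\lambda$ in every remainder. Here the remainder $O(s^2/\lambda^3)$ must be understood as a function whose $\lambda$-derivative is $O(s^2/\lambda^4)$.

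\textbf{Step 2: adding $v$.} I will Taylor expand $\de_\lambda Q_s(\Phi_\lambda+v)$ in $v$:
\[
\de_\lambda Q_s(\Phi_\lambda+v)=\de_\lambda Q_s(\Phi_\lambda)+DQ_s(\Phi_\lambda)[\de_\lambda v]+\de_\lambda\big(DQ_s(\Phi_\lambda)\big)[v]+O\big(\N{v}\,\N{\de_\lambda v}+\N{v}^2\big).
\]
By the Lyapunov--Schmidt equation of the preceding proposition, $DQ_s(\Phi_\lambda)$ is proportional to $\ell\,\phi_\lambda^2\de_\lambda\phi_\lambda$, i.e.\ it points in the $\de_\lambda\phi_\lambda$ direction of $T\ci{M}$. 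Since $v\in V_{p,\lambda}$ is orthogonal to that direction, the term $DQ_s(\Phi_\lambda)[v]$ vanishes identically in $\lambda$ (up to a correction coming from the motion of $V_{p,\lambda}$, which is of the same small order). The remaining terms are bounded by $|\ell|\,\N{v}/\lambda$ plus quadratic terms in $\N{v}$. Using $\N{v}<Cs^2/\lambda^2$ together with a bound $|\ell|=O(s^2/\lambda^2)$, which should follow from the $v=0$ computation (the gradient size is $m_s/\lambda^3$ times the normalization of $\de_\lambda\phi_\lambda$), all these terms are $O(s^2/\lambda^4)$ and smaller.

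\textbf{Main obstacle.} The hardest part is Step 1: making sure the $\lambda$-derivative of the expansion from \cite{CMY2} holds uniformly, with the error genuinely of order $s^2/\lambda^4$ and not merely $o(s^2)/\lambda^3$. This needs a careful bound on the $O(\rho)$ term in \eqref{ExpansionGreenFunction} and on the $O(s^2)$ interaction between $w_\lambda$ and the Green function tail. I would handle it by redoing the CMY2 integrals with $\de_\lambda$ applied before integration, using the scaling structure of $\phi_\lambda$.
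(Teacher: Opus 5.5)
\textbf{The gap is in Step 1, which is where the whole content of the statement lies.} The mechanism you describe there is wrong. $\phi_\lambda$ is a bubble of the \emph{standard} structure, so away from $p$ the rescaled $\lambda\phi_\lambda$ approximates the Green function of $L_0$, for which $A=0$. It does not see $A_s$ at all.

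The only information you have on the correction is $\N{w_\lambda}\le Cs$ (and $\N{\de_\lambda w_\lambda}=O(s/\lambda)$). This information controls $Q_s(\phi_\lambda+w_\lambda)$ only up to errors of size $O(s^2)$, uniformly in $\lambda$ but with no decay in $\lambda$. Such errors swamp the term $A_s/\lambda^2\sim s^2/\lambda^2$; note that $A_s$ is itself second order in $s$. So the expansion $Q_s(\Phi_\lambda)=Q_0(\phi_\lambda)+cA_s/\lambda^2+O(s^2/\lambda^3)$ does not follow from what you invoke. The sentence ``tracking the constants should produce $\frac{4}{3\pi}$'' leaves both the mechanism and the value $c=-8$ unproved.

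\textbf{The missing idea, which is the paper's route.} Work with the Green-function-adapted approximate solutions $\breve{\phi}_\lambda=\lambda\bigl(1+\lambda^2F+\lambda^4G_p^{-2}\bigr)^{-1/2}$, written in CR normal coordinates of $S^3_s$. For these, Lemma B.3 of \cite{CMY2} gives
$S_\lambda=2V_\lambda-\frac32AS^1_\lambda+O(\lambda^{-3})$ and $V_\lambda=4\pi^2+\frac12A\lambda^2V^1_\lambda+O(\lambda^{-3})$.
The paper differentiates these expansions term by term, using the expansion of $\de_\lambda\breve{\phi}_\lambda$; it does not differentiate an $O$-remainder, which is the issue you flagged as the main obstacle. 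It then uses the explicit identities $\int_{\H^1}\bigl(3|z|^2(4+\lambda^2|z|^2)-\lambda^2\rho^4\bigr)\rho^2\overset{\circ}{U}_\lambda^6=32\pi/\lambda^2$ and $\int_{\H^1}\overset{\circ}{U}_\lambda^4=4\pi^2$. These give $\frac{d}{d\lambda}Q_s(\breve{\phi}_\lambda)=16A/\lambda^3+O(\lambda^{-4})$, which equals $\frac{4}{3\pi}m_s/\lambda^3+O(\lambda^{-4})$ by $m=12\pi A$. Only then is the estimate transferred from $\breve{\phi}_\lambda$ to $\Phi_\lambda$, via the comparison in Subsection B.3 of \cite{CMY2}, and finally to $\Phi_\lambda+v$.

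\textbf{Your Step 2 is sound}, and more explicit than the paper's one-line citation. The Lyapunov--Schmidt equation together with $\int\phi_\lambda^3\de_\lambda\phi_\lambda=0$ and the orthogonality condition gives $r_{\Phi_\lambda}=2$ exactly. Hence $dQ_s(\Phi_\lambda)=2V^{-1/2}\ell\,\phi_\lambda^2\de_\lambda\phi_\lambda$, and this annihilates $V_{p,\lambda}$ because $L_0\de_\lambda\phi_\lambda=6\phi_\lambda^2\de_\lambda\phi_\lambda$. Differentiating that identity controls the cross term. One minor slip: since $\N{\de_\lambda\phi_\lambda}\sim1/\lambda$, one gets $\ell\sim s^2/\lambda$ rather than $s^2/\lambda^2$. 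This is harmless for your bounds.
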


\begin{proof}
To obtain this estimate, we will need the approximate critical points $\breve{\phi}_{\lambda}$ for $Q_s$ defined in Subsection B.1 in \cite{CMY2}.
 Given $p\in S^3$ and $r>0$ small, consider a function $F$ such that in CR normal coordinates
$$ \begin{cases}
  F(z,t) = |z|^2 & \hbox{ for } \rho\leq r; \\
  F \equiv 0 & \hbox{ for } \rho\geq 2 r
  \end{cases}$$
  where, we recall, $\rho^4=|z|^4+t^2$.
  Let $G_p$ be the Green function for $L_s$ centered at $p$ as defined in Section \ref{Section2}.
  
  Let $\widetilde{G}=G_p^{-2}$. Then we define
  $$\breve{\var}_{\lambda} = \frac{\l}{\left( 1 + \l^2 F + \l^4 \tilde{G} \right)^{\frac 12}}$$
  (we omit the dependence on $p$ and $s$ as superfluous).

  Let
\[
    S_\lambda=\int_{S^3}\breve\phi_\lambda L_s\breve\phi_\lambda\,
\theta\wedge d\theta,
\qquad
V_\lambda=\int_{S^3}\breve\phi_\lambda^4\,\theta\wedge d\theta .
\]
Then \(Q_s(\breve\phi_\lambda)=S_\lambda V_\lambda^{-1/2}\), and hence
\begin{equation}\label{FormulaDerivative}
    \frac{d}{d\lambda}Q_s(\breve\phi_\lambda)
=
S_\lambda' V_\lambda^{-1/2}
-\frac12 S_\lambda V_\lambda^{-3/2}V_\lambda' .
\end{equation}
In the proof of Lemma B.3 in \cite{CMY2} they prove that
$$S_{\lambda} = 2V_{\lambda} -\frac{3}{2}A\int_{\H^1} |z|^2 (4 + \lambda^2 |z|^2) \rho^2 \overset{\circ}{U}_\lambda^6\, \Theta\wedge d\Theta + $$
$$+\int_{S^3} \breve{\phi}^4 (O (\l^2 \rho^2) + O(\rho^3)) \th \wedge d \th + \int_{S^3} \breve{\phi}^6 (O(\rho^5) + O(\l^2 \rho^7))  \th \wedge d \th$$
where
\[
  \overset{\circ}{U}_\l = \frac{\l}{\left( 1 + \l^2 |z|^2 + \frac 14 \l^4 (|z|^4 + t^2) \right)^{\frac 12}}, 
  \qquad \quad (z,t) \in \H^1,
\]
and $A$ is defined in formula \eqref{ExpansionGreenFunction};
therefore, calling
$$S^1_{\lambda}= \int_{\H^1} |z|^2 (4 + \lambda^2 |z|^2) \rho^2 \overset{\circ}{U}_\lambda^6,$$
it holds that
$$S_{\lambda} = 2V_{\lambda} -\frac{3}{2}AS^1_{\lambda} +O\left(\frac{1}{\l^3}\right).$$
It can similarly be proved that
 \begin{align}
& \frac{dS_{\lambda}}{d\lambda} 
%\frac{\de}{\de\lambda}\int_{S^3} \breve{\phi}_{\lambda} L_s \breve{\phi}_\lambda \theta \wedge d \theta
=2\int_{S^3}\left(\frac{\de}{\de\lambda}\breve{\phi}_{\lambda} \right)L_s \breve{\phi}_\lambda \theta \wedge d \theta \nonumber\\
&=8 \int_{S^3} \breve{\phi}_\lambda^3\left(\frac{\de}{\de\lambda}\breve{\phi}_{\lambda} \right) \theta \wedge d \theta  - 9A \int_{\H^1} |z|^2 (4 + \lambda^2 |z|^2) \rho^2 \overset{\circ}{U}_\lambda^5\left(\frac{\de}{\de\lambda}\overset{\circ}{U}_{\lambda} \right) \, \Theta\wedge d\Theta \nonumber\\
&- 3\lambda A \int_{\H^1} |z|^4 \rho^2 \overset{\circ}{U}_\lambda^6 \, \Theta\wedge d\Theta + \int_{S^3} \breve{\phi}_{\lambda}^3\left(\frac{\de}{\de\lambda}\breve{\phi}_{\lambda} \right) (O (\lambda^2 \rho^2) + O(\rho^3)) \theta \wedge d \theta \nonumber\\
&+\int_{S^3} \breve{\phi}_{\lambda}^5\left(\frac{\de}{\de\lambda}\breve{\phi}_{\lambda} \right) (O(\rho^5) + O(\lambda^2 \rho^7))
  \theta \wedge d \theta . \nonumber
  \end{align}
hence
$$\frac{dS_{\lambda}}{d\lambda} = 2\frac{dV_{\lambda}}{d\lambda} - \frac{3}{2}A\frac{dS^1_{\lambda}}{d\lambda} + O\left(\frac{1}{\l^4}\right).$$
 In the proof of Lemma B.3 in \cite{CMY2} they prove that
$$\breve{\var}_\l = \overset{\circ}{U}_\l  + \frac 18 A \rho^6 \l^2 \overset{\circ}{U}_\l^3 
+ O \left( \frac{\rho^{12} \l^8}{(1 + \l^4 \rho^4)^2}  \right) \overset{\circ}{U}_\l$$
 and in a similar way it can be proved that
$$\frac{\de}{\de\lambda}\breve{\var}_\l = \frac{\de}{\de\lambda}\overset{\circ}{U}_\l  + \frac 14 A \rho^6 \l\overset{\circ}{U}_\l^3 + \frac{3}{8} A \rho^6 \l^2 \overset{\circ}{U}_\l^2\frac{\de}{\de\lambda}\overset{\circ}{U}_\l+$$
$$+ O \left( \frac{\rho^{12} \l^7}{(1 + \l^4 \rho^4)^2}  \right) \overset{\circ}{U}_\l + O \left( \frac{\rho^{12} \l^8}{(1 + \l^4 \rho^4)^2}  \right) \frac{\de}{\de\lambda}\overset{\circ}{U}_\l.$$
There they also prove that
\begin{eqnarray*}
  V_{\lambda} =\int_{S^3} \breve{\var}_\l^4 \th \wedge d \th  & = & \int_{\H^1} \overset{\circ}{U}_\l^4 \Theta\wedge d\Theta  + \frac{1}{2} A \l^2 \int_{\H^1} \rho^6 \overset{\circ}{U}_\l^6 \Theta\wedge d\Theta + O(1/\l^3)
\end{eqnarray*}
that is, calling
$$V_{\lambda}^1= \int_{\H^1} \rho^6 \overset{\circ}{U}_\l^6\Theta\wedge d\Theta ,$$
that
\begin{eqnarray*}
  V_{\lambda} & = & \int_{\H^1} \overset{\circ}{U}_\l^4 \Theta\wedge d\Theta  + \frac{1}{2} A \l^2 V_{\lambda}^1  + O(1/\l^3)
\end{eqnarray*}
and it can similarly be proved that
$$\frac{dV_{\lambda}}{d\lambda} = 4\int_{S^3} \breve{\var}_\l^3\frac{\de}{\de\lambda}\breve{\var}_\l \th \wedge d \th =$$
$$= A \l \int_{\H^1} \rho^6 \overset{\circ}{U}_\l^6 \Theta\wedge d\Theta + 
  3 A \l^2 \int_{\H^1} \rho^6 \overset{\circ}{U}_\l^5\frac{\de}{\de\lambda}\overset{\circ}{U}_\l \Theta\wedge d\Theta
  + O(1/\l^4)=$$
 $$= A\lambda V_{\lambda}^1 + \frac{1}{2}A\lambda^2\frac{dV_{\lambda}^1}{d\lambda} +O(1/\l^4).$$
They also prove that
$$\int_{\H^1} \overset{\circ}{U}_{\sqrt{2}}^4 \, \Theta\wedge d\Theta
= 4 \pi^2. $$
Putting all of the above estimates in formula \eqref{FormulaDerivative}, we get that
$$\frac{d}{d\lambda}Q_s(\breve\phi_\lambda)= V_{\lambda}^{-\frac{3}{2}}\left[\left( 2\frac{dV_{\lambda}}{d\lambda} - \frac{3}{2}A\frac{dS^1_{\lambda}}{d\lambda} \right)\left(4\pi^2 +\frac{1}{2} A \l^2 V_{\lambda}^1 \right) + \right.$$
$$\left.-\frac{1}{2}\left( 2V_{\lambda} -\frac{3}{2}AS^1_{\lambda}\right)\frac{dV_{\lambda}}{d\lambda}\right] +O\left(\frac{1}{\lambda^4}\right) =$$
$$= V_{\lambda}^{-\frac{3}{2}}\left[\left(2\frac{dV_{\lambda}}{d\lambda} - \frac{3}{2}A\frac{dS^1_{\lambda}}{d\lambda} \right) \cdot 4\pi^2-V_{\lambda}\frac{dV_{\lambda}}{d\lambda}\right] +O\left(\frac{1}{\lambda^4}\right) =$$
$$= (4\pi^2)^{-\frac{3}{2}}4\pi^2\left[A\lambda V_{\lambda}^1 + \frac{1}{2}A\lambda^2\frac{dV_{\lambda}^1}{d\lambda} - \frac{3}{2}A\frac{dS^1_{\lambda}}{d\lambda} \right] +O\left(\frac{1}{\lambda^4}\right) =$$
$$= \frac{1}{2\pi}A\left[\lambda\int_{\H^1} \rho^6 \overset{\circ}{U}_\l^6\Theta\wedge d\Theta + \frac{1}{2}\lambda^2\int_{\H^1}\rho^6 \cdot6\overset{\circ}{U}_\l^5\left(\frac{\de}{\de \lambda}\overset{\circ}{U}_\l\right)\Theta\wedge d\Theta +\right.$$
$$\left.-\frac{3}{2} \int_{\H^1} |z|^2\cdot 2\lambda\rho^2 \overset{\circ}{U}_\lambda^6\Theta\wedge d\Theta -\frac{3}{2} \int_{\H^1} |z|^2 (4 + \lambda^2 |z|^2) \rho^2 \cdot 6\overset{\circ}{U}_\lambda^5\left(\frac{\de}{\de \lambda}\overset{\circ}{U}_\l\right)\Theta\wedge d\Theta\right]=$$
$$= \frac{1}{2\pi}A\left( \l \int_{\H^1} \left(\rho^6 -3|z|^4 \rho^2\right)\overset{\circ}{U}_\l^6 \Theta\wedge d\Theta+ \right.$$
 $$\left.+  \int_{\H^1}\left(3\lambda^2\rho^6 -9|z|^2 (4 + \lambda^2 |z|^2) \rho^2\right) \overset{\circ}{U}_\lambda^5\left(\frac{\de}{\de\lambda}\overset{\circ}{U}_{\lambda} \right) \, \Theta\wedge d\Theta\right) +O\left(\frac{1}{\lambda^4}\right).$$

 But
 $$ \frac{d}{d\lambda}\int_{\H^1} \left( 3 |z|^2 (4 + \l^2 |z|^2) - \l^2 \rho^4 \right) \rho^2 \overset{\circ}{U}_\l^6 \, \Theta\wedge d\Theta = $$
 $$=\int_{\H^1} \left( 6\l |z|^4 - 2\l \rho^4 \right) \rho^2 \overset{\circ}{U}_\l^6 \, \Theta\wedge d\Theta + $$
 $$ +6\int_{\H^1} \left( 3 |z|^2 (4 + \l^2 |z|^2) - \l^2 \rho^4 \right) \rho^2 \overset{\circ}{U}_\l^5\frac{\de}{\de\lambda}\overset{\circ}{U}_{\lambda} \, \Theta\wedge d\Theta $$
 and in the proof of Lemma B.3 in \cite{CMY2} it is proved that
 $$\int_{\H^1} \left( 3 |z|^2 (4 + \l^2 |z|^2) - \l^2 \rho^4 \right) \rho^2 \overset{\circ}{\var}_\l^6 \, \overset{\circ}{\theta} \wedge d \overset{\circ}{\theta} = \frac{32\pi}{\lambda^2}.$$
 Therefore
 $$\frac{d}{d\lambda}Q_s(\breve{\var}_\l) = $$
 $$= A\frac{1}{2\pi}\left( \l \int_{\H^1} \left(\rho^6 -3|z|^4 \rho^2\right)\overset{\circ}{U}_\l^6 \Theta\wedge d\Theta + \right.$$
 $$\left.+ \int_{\H^1}\left(3\lambda^2\rho^6 -9|z|^2 (4 + \lambda^2 |z|^2) \rho^2\right) \overset{\circ}{U}_\lambda^5\left(\frac{\de}{\de\lambda}\overset{\circ}{U}_{\lambda} \right) \, \Theta\wedge d\Theta\right) +O\left(\frac{1}{\lambda^4}\right)=$$
 $$=-A\frac{1}{4\pi}\frac{d}{d\lambda}\int_{\H^1} \left( 3 |z|^2 (4 + \l^2 |z|^2) - \l^2 \rho^4 \right) \rho^2 \overset{\circ}{U}_\l^6 \, \Theta\wedge d\Theta +O\left(\frac{1}{\lambda^4}\right)=$$
 $$=-A\frac{1}{4\pi}\frac{d}{d\lambda}\left(\frac{32\pi}{\lambda^2}\right) +O\left(\frac{1}{\lambda^4}\right) = A\frac{16}{\lambda^3} +O\left(\frac{1}{\lambda^4}\right).$$
 The estimate is shown to hold for $\Phi_{\lambda}+v$ too by the same argument in Subsection B.3 in \cite{CMY2}. Then the thesis follows thanks to formula \eqref{MassGreenRelation}.
\end{proof}

\begin{lemma}\label{LemmaInvariant}
 Up to choosing $\Lambda$ big enough in the definition of $\Omega$, it is invariant by the CR Yamabe flow.
\end{lemma}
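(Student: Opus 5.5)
We have to show that a solution $u(t)$ of the CR Yamabe flow with $u(0)\in\Omega$ stays in $\Omega$ for all $t\ge 0$. The plan is a continuity (first exit time) argument. Set
$$t_0=\sup\{t\ge0 : u(\tau)\in\Omega \text{ for all } \tau\in[0,t)\}$$
and assume $t_0<\infty$. By continuity of the flow in $S^{1,2}$, $u(t_0)\in\overline{\Omega}\setminus\Omega$. Here we use a tubular-neighbourhood property: for $\Lambda$ large and $s$ small, the decomposition $u=\Phi_{p,\lambda}+v$ with $v\in V_{p,\lambda}$ is unique and depends smoothly on $u$, with $p(t),\lambda(t)$ differentiable in $t$. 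This is a standard implicit function argument, because $\widetilde{\ci{M}}$ is an $O(s)$-perturbation of $\ci{M}$ and $V_{p,\lambda}$ is its normal space at $\phi_{p,\lambda}$. Hence the boundary of $\Omega$ has only two parts. The first is the lateral part $\{\N{v}=Cs^2/\lambda^2,\ \lambda\ge\Lambda\}$. The second is the bottom part $\{\lambda=\Lambda,\ \N{v}\le Cs^2/\lambda^2\}$. (The constraint $p\in S^3$ gives no boundary, since $S^3$ is compact.)

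\emph{Lateral part.} This is exactly excluded by Proposition \ref{PropositionLateralBoundary}, which shows that the flow cannot reach it for $s$ small.

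\emph{Bottom part.} Here we must show that $\lambda(t)$ cannot decrease to $\Lambda$. In fact we aim to show $\lambda'(t)>0$ throughout $\Omega$. Since the flow preserves volume, it is, up to a positive factor, the gradient flow of $Q_s$ in the weighted $L^2$ metric with weight $u^4\,dV$. Concretely, differentiating $Q_s(u(t))$ gives
$$\frac{d}{dt}Q_s(u(t)) = -c\int u^{-2}\bigl(-L_su+r u^3\bigr)^2 dV\le 0 .$$
To extract $\lambda'$ we project the flow equation onto the direction $\de_\lambda\Phi_{p,\lambda}$. We differentiate $u=\Phi_{p(t),\lambda(t)}+v(t)$ in $t$ and pair in $S^{1,2}$ with $\de_\lambda\phi_{p,\lambda}$, using $v\in V_{p,\lambda}$. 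Up to errors of order $O(s\,|\lambda'|)$ and $O(\N{v}\,|\lambda'|)$, this gives
$$\lambda'(t)\,\N{\de_\lambda\phi_{p,\lambda}}^2\approx \Bigl\langle \de_t u,\de_\lambda\phi_{p,\lambda}\Bigr\rangle .$$
Substituting \eqref{CRYamabeFlowEquation}, the right-hand side is, to leading order, a positive multiple of $-\de_\lambda Q_s(\Phi_{p,\lambda}+v)$. The pairing $\int u^{-2}(\cdot)L_s\de_\lambda\phi$ reduces, through $L_0\de_\lambda\phi=6\phi^2\de_\lambda\phi$, to $\int u\,(-L_su+ru^3)\,\de_\lambda\phi\cdot(\text{positive const})$. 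This last quantity is precisely the differential of $Q_s$ applied to $\de_\lambda\phi$, modulo the normalizing factor of $Q_s$.

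Proposition \ref{GradientEstimate} then gives
$$\de_\lambda Q_s = \frac{4}{3\pi}\frac{m_s}{\lambda^3}+O\Bigl(\frac{s^2}{\lambda^4}\Bigr).$$
By Proposition \ref{PropositionRossiMass}, $m_s\le -17\pi s^2$ for small $s$, so $\de_\lambda Q_s\le -c\,s^2/\lambda^3$ once $\Lambda$ is large enough. Therefore $\lambda'>0$ in $\Omega$ and the bottom boundary is never reached.

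\emph{Main obstacle.} The hard step is making this projection rigorous with the correct scaling. One has $\N{\de_\lambda\phi_{p,\lambda}}\sim 1/\lambda$, and the error terms coming from $v$, from $w_\lambda$, and from $p'(t)$ must be shown to be $o(s^2/\lambda^3)$ after normalization. Two points need care. First, the $p'$ contributions pair with $\de_\lambda\phi$ only to lower order, which I would get from $U(2)$-equivariance (the rotations $X_k\phi$ are orthogonal to $\de_\lambda\phi$). Second, the weight $u^{-2}$ must be controlled. I would handle the weight by the conformal change $\widetilde{\theta}=u^2\theta$, as in the proof of Proposition \ref{PropositionLateralBoundary}, which turns the weighted pairing into an unweighted one on the bubble. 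Once both boundary parts are excluded, $t_0=\infty$, which proves the lemma.
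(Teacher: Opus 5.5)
Your proposal follows essentially the same route as the paper. It uses a first-exit-time argument: the lateral boundary is excluded by Proposition \ref{PropositionLateralBoundary}, and the bottom face $\lambda=\Lambda$ is excluded by projecting the flow velocity onto the $\de_\lambda\Phi$ direction and combining \eqref{DifferentialQ} with Proposition \ref{GradientEstimate} and the negativity of $m_s$ (the paper checks the sign only at the exit time, whereas you get $\lambda'>0$ throughout $\Omega$, which is the same computation). One small slip: once $\phi^2/u^2$ is replaced by $1$, the projected quantity is $3\int(-L_su+ru^3)\,\de_\lambda\phi$, without the extra factor $u$ you wrote, and it is this quantity that equals a positive multiple of $-dQ_s(u)[\de_\lambda\phi]$ in \eqref{DifferentialQ}.
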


\begin{proof}
We have
$$dQ_s(u)[v]= \frac{2\int_MvL_su\int_Mu^4-\frac{1}{2}\int_MuL_su4\int_Mu^3v}{\left(\int_Mu^4\right)^{\frac{3}{2}}}=$$
\begin{equation}\label{DifferentialQ}
 =2\left(\int_Mu^4\right)^{-\frac{1}{2}}\int_M\left(L_su-r_{\theta}u^3\right)v
\end{equation}
By Proposition \ref{PropositionLateralBoundary} we have to prove that if $u(t)\in\Omega$ for $t\in[0,t_0)$ then
$$u(t_0)\not\in\left\{\Phi_{p,\Lambda}+v\;\middle|\; v\in V_{p,\Lambda}, \N{v}< C\frac{s^2}{\lambda^2},p\in S^3 \right\}.$$
Writing $u(t)=\Phi_{p(t),\lambda(t)}+v(t)$, if this were the case, projecting on the submanifold $\widetilde{\ci{M}}=\{\Phi_{p,\lambda}+v(t_0)\},$ then we would have
$$\left.\frac{d}{dt}\right|_{t=t_0}\Phi_{p(t),\lambda(t)}\in \left\{\g{v}\in
%V_{p(t_0),\Lambda}^{\perp}
T_{p(t_0),\Lambda}\widetilde{\ci{M}}
\;\middle|\;\bra\frac{\de\Phi_{p(t_0),\Lambda}}{\de\lambda},\g{v}\ket_{L^2}\le 0\right\}$$
Putting $u=\Phi_{p(t_0),\Lambda}$, and taking $v=\frac{\de\Phi_{p(t_0),\Lambda}}{\de\lambda}$ in formula \eqref{DifferentialQ} and $s$ is small enough we get a contradiction with Proposition \ref{GradientEstimate} and the negativity of the mass for Rossi spheres (Proposition \ref{PropositionRossiMass}).
\end{proof}

\begin{lemma}\label{LastLemma}
If $u(t)=\Phi_{p(t),\lambda(t)}+v(t)$ is a CR Yamabe flow in $\Omega$ then $\lambda(t)\gtrsim t^{\frac{1}{4}}$
\end{lemma}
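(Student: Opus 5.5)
We aim for a differential inequality $\lambda'(t)\gtrsim \lambda(t)^{-3}$ along the flow. Integrating it gives $\lambda(t)^4\ge \lambda(0)^4+ct\gtrsim t$, which is the claim. The inequality comes from the energy identity for the flow combined with Proposition \ref{GradientEstimate}. Lemma \ref{LemmaInvariant} guarantees that $u(t)\in\Omega$ for all $t\ge 0$, so the decomposition $u(t)=\Phi_{p(t),\lambda(t)}+v(t)$ and the estimates valid on $\Omega$ hold for all times. We take $p(t),\lambda(t)$ to be differentiable by the implicit function theorem applied to the orthogonality conditions defining $V_{p,\lambda}$.

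First we compute $\frac{d}{dt}Q_s(u(t))$ in two ways. Using \eqref{DifferentialQ} and the flow equation \eqref{CRYamabeFlowEquation}, with $\int u^4$ constant, we get
$$\frac{d}{dt}Q_s(u)=2\Big(\int u^4\Big)^{-1/2}\int (L_su-r u^3)\,\partial_t u=-\Big(\int u^4\Big)^{-1/2}\int \frac{(L_su-ru^3)^2}{u^2}\le 0 .$$
By Cauchy--Schwarz and \eqref{DifferentialQ} this is comparable to minus the square of the dual norm of $dQ_s(u)$ in the weighted $L^2(u^4)$-type metric. On the other hand, the chain rule along the decomposition gives
$$\frac{d}{dt}Q_s(u)=\partial_\lambda Q_s\,\lambda'+\partial_p Q_s\,p'+dQ_s(u)[v'].$$
By $U(2)$ (in particular $SU(2)$) invariance of the Rossi structure, $\partial_pQ_s$ of $\Phi_{p,\lambda}$ vanishes, and on $\Omega$ it is of lower order. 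The term with $v'$ is controlled by $\N{v}\lesssim s^2\lambda^{-2}$ together with $dQ_s(\Phi+v)=dQ_s(\Phi)+O(\N{v})$.

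A more direct route is to use the evolution of $\lambda$ itself. Projecting the flow onto $\partial_\lambda\Phi_{p,\lambda}$ (which is transversal to $V_{p,\lambda}$) gives
$$\lambda'\,\N{\partial_\lambda\Phi}^2_{w}\approx \langle \partial_t u,\partial_\lambda\Phi\rangle_w=-c\,dQ_s(u)[\partial_\lambda\Phi]+\text{error},$$
where $w$ is the $L^2(u^4\,\theta\wedge d\theta)$-weight induced by the flow. Here $dQ_s(u)[\partial_\lambda\Phi]=\partial_\lambda Q_s(\Phi_\lambda+v)=\frac{4}{3\pi}\frac{m_s}{\lambda^3}+O(s^2\lambda^{-4})$, which is $\le -c's^2\lambda^{-3}$ for $\lambda>\Lambda$ large by Propositions \ref{GradientEstimate} and \ref{PropositionRossiMass}. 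For the normalization we compute the scaling: $\partial_\lambda\phi_\lambda$ is of size $\lambda^{-1}\phi_\lambda$ near $p$. The relevant weighted norm $\int u^2(\partial_\lambda\Phi)^2$ (coming from the factor $u^{-2}$ in the flow, i.e.\ $\partial_tu=\frac12 u^{-2}(\cdots)$) scales like $\lambda^{-2}\int\phi_\lambda^4\sim\lambda^{-2}$. Hence $\lambda'\lambda^{-2}\gtrsim s^2\lambda^{-3}$, i.e.\ $\lambda'\gtrsim s^2\lambda^{-1}$, which would even give $\lambda\gtrsim t^{1/2}$. With the unweighted $S^{1,2}$ projection, $\N{\partial_\lambda\phi_\lambda}^2\sim\lambda^{-2}$ as well. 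Whichever normalization survives, we obtain at least $\lambda'\gtrsim\lambda^{-3}$ for $\lambda$ bounded below. Integration then yields $\lambda\gtrsim t^{1/4}$, so we settle on the conservative bound matching the statement.

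The main obstacle is the projection step: $\partial_t u$ contains the component $\partial_t v$ and cross terms. One must show that $\langle \partial_t v,\partial_\lambda\Phi\rangle$ and the $p'$-contribution are $o(s^2\lambda^{-3})$ relative to the main term. We would handle this by differentiating the orthogonality relation $\langle v,\partial_\lambda\phi_{p,\lambda}\rangle=0$, which gives $\langle \partial_t v,\partial_\lambda\phi\rangle=-\langle v,\partial_t\partial_\lambda\phi\rangle=O(\N{v}(|\lambda'|\lambda^{-1}+|p'|))$. Combined with $\N{v}\lesssim s^2\lambda^{-2}$ and $\N{w_\lambda}\lesssim s$, this error can be absorbed into the left side for $s$ small and $\Lambda$ large, as in the proof of Proposition \ref{PropositionLateralBoundary}.
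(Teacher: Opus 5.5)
Your proposal follows the same route the paper intends (its proof is a one-line appeal to Proposition \ref{GradientEstimate} and to the projection argument of Lemma \ref{LemmaInvariant}): project the flow onto the $\lambda$-direction, use $dQ_s(u)[\partial_\lambda\Phi]\le -c\,s^2\lambda^{-3}$ coming from $m_s<0$, and integrate the resulting lower bound on $\lambda'$; your scaling count, $\lambda'\gtrsim s^2\lambda^{-1}$, is even more than needed. The step to tighten is that you project in the $u^2$-weighted $L^2$ product but differentiate an $S^{1,2}$ orthogonality relation: the two are linked through $L_0\partial_\lambda\phi_{p,\lambda}=6\phi_{p,\lambda}^2\partial_\lambda\phi_{p,\lambda}$, and the remaining discrepancy between $u^2\partial_\lambda\Phi$ and $\phi_{p,\lambda}^2\partial_\lambda\phi_{p,\lambda}$ is paired with $\partial_t v$, which $\N{v}$ alone does not control, so it must be handled by writing $\partial_t v=\partial_t u-\lambda'\partial_\lambda\Phi-p'\partial_p\Phi$ and using the flow equation rather than orthogonality alone.
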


\begin{proof}
 It follows from Proposition \ref{GradientEstimate} with an argument  similar to that of Lemma \ref{LemmaInvariant}.
\end{proof}

\begin{proof}[Proof of Theorem \ref{MainTheorem}]
 The theorem follows from Lemma \ref{LastLemma} and the density of smooth functions in $S^{1,2}$.
\end{proof}

\end{document}